\title{Galois actions of finitely generated groups rarely have model companions}
\author[\"{O}. BEYARSLAN]{\"{O}zlem Beyarslan$^{\clubsuit}$}
\thanks{$^{\clubsuit}$ Supported by the T\"{u}bitak 1001 grant no. 119F397.}
\address{$^{\clubsuit}$Bo\v{g}azi\c{c}i \"{U}niversitesi}
\email{ozlem.beyarslan@boun.edu.tr}
\author[P. KOWALSKI]{Piotr Kowalski$^{\spadesuit}$}
\thanks{$^{\spadesuit}$ Supported by the Narodowe Centrum Nauki grant no. 2018/31/B/ST1/00357 and by the T\"{u}bitak 1001 grant no. 119F397.}
\address{$^{\spadesuit}$Instytut Matematyczny\\
Uniwersytet Wroc{\l}awski\\
Wroc{\l}aw\\
Poland}
\email{pkowa@math.uni.wroc.pl} \urladdr{http://www.math.uni.wroc.pl/\textasciitilde pkowa/ }
\thanks{2020 \textit{Mathematics Subject Classification} Primary 03C60; Secondary 12H10, 11S20, 20E18.}
\thanks{\textit{Key words and phrases}. Difference field, Model companion, Frattini cover.}
 \DeclareMathOperator{\aut}{Aut} \DeclareMathOperator{\id}{id}
  \DeclareMathOperator{\gal}{Gal}
 \DeclareMathOperator{\alg}{alg}
\DeclareMathOperator{\li}{\underleftarrow{\lim}}
\DeclareMathOperator{\rat}{rat}
\DeclareMathOperator{\sep}{sep}
\DeclareMathOperator{\dcf}{DCF}
\newtheorem{theorem}{Theorem}[section]
\newtheorem{prop}[theorem]{Proposition}
\newtheorem{lemma}[theorem]{Lemma}
\newtheorem{cor}[theorem]{Corollary}
\newtheorem{fact}[theorem]{Fact}
\theoremstyle{definition}
\newtheorem{definition}[theorem]{Definition}
\newtheorem{example}[theorem]{Example}
\newtheorem{remark}[theorem]{Remark}
\newtheorem{question}[theorem]{Question}
\begin{document} 
\newcommand{\lili}{\underleftarrow{\lim }}
\newcommand{\coco}{\underrightarrow{\lim }}
\newcommand{\twoc}[3]{ {#1} \choose {{#2}|{#3}}}
\newcommand{\thrc}[4]{ {#1} \choose {{#2}|{#3}|{#4}}}
\newcommand{\Zz}{{\mathds{Z}}}
\newcommand{\Ff}{{\mathds{F}}}
\newcommand{\Cc}{{\mathds{C}}}
\newcommand{\Rr}{{\mathds{R}}}
\newcommand{\Nn}{{\mathds{N}}}
\newcommand{\Qq}{{\mathds{Q}}}
\newcommand{\Kk}{{\mathds{K}}}
\newcommand{\Pp}{{\mathds{P}}}
\newcommand{\ddd}{\mathrm{d}}
\newcommand{\Aa}{\mathds{A}}
\newcommand{\dlog}{\mathrm{ld}}
\newcommand{\ga}{\mathbb{G}_{\rm{a}}}
\newcommand{\gm}{\mathbb{G}_{\rm{m}}}
\newcommand{\gaf}{\widehat{\mathbb{G}}_{\rm{a}}}
\newcommand{\gmf}{\widehat{\mathbb{G}}_{\rm{m}}}
\newcommand{\ka}{{\bf k}}
\newcommand{\ot}{\otimes}
\newcommand{\si}{\mbox{$\sigma$}}
\newcommand{\ks}{\mbox{$({\bf k},\sigma)$}}
\newcommand{\kg}{\mbox{${\bf k}[G]$}}
\newcommand{\ksg}{\mbox{$({\bf k}[G],\sigma)$}}
\newcommand{\ksgs}{\mbox{${\bf k}[G,\sigma_G]$}}
\newcommand{\cks}{\mbox{$\mathrm{Mod}_{({A},\sigma_A)}$}}
\newcommand{\ckg}{\mbox{$\mathrm{Mod}_{{\bf k}[G]}$}}
\newcommand{\cksg}{\mbox{$\mathrm{Mod}_{({A}[G],\sigma_A)}$}}
\newcommand{\cksgs}{\mbox{$\mathrm{Mod}_{({A}[G],\sigma_G)}$}}
\newcommand{\crats}{\mbox{$\mathrm{Mod}^{\rat}_{(\mathbf{G},\sigma_{\mathbf{G}})}$}}
\newcommand{\crat}{\mbox{$\mathrm{Mod}^{\rat}_{\mathbf{G}}$}}
\newcommand{\cratinv}{\mbox{$\mathrm{Mod}^{\rat}_{\mathbb{G}}$}}
\newcommand{\ra}{\longrightarrow}
\newcommand{\bdcf}{B-\dcf}
\makeatletter
\providecommand*{\cupdot}{%
  \mathbin{%
    \mathpalette\@cupdot{}%
  }%
}
\newcommand*{\@cupdot}[2]{%
  \ooalign{%
    $\m@th#1\cup$\cr
    \sbox0{$#1\cup$}%
    \dimen@=\ht0 %
    \sbox0{$\m@th#1\cdot$}%
    \advance\dimen@ by -\ht0 %
    \dimen@=.5\dimen@
    \hidewidth\raise\dimen@\box0\hidewidth
  }%
}

\providecommand*{\bigcupdot}{%
  \mathop{%
    \vphantom{\bigcup}%
    \mathpalette\@bigcupdot{}%
  }%
}
\newcommand*{\@bigcupdot}[2]{%
  \ooalign{%
    $\m@th#1\bigcup$\cr
    \sbox0{$#1\bigcup$}%
    \dimen@=\ht0 %
    \advance\dimen@ by -\dp0 %
    \sbox0{\scalebox{2}{$\m@th#1\cdot$}}%
    \advance\dimen@ by -\ht0 %
    \dimen@=.5\dimen@
    \hidewidth\raise\dimen@\box0\hidewidth
  }%
}
\makeatother

\def\Ind#1#2{#1\setbox0=\hbox{$#1x$}\kern\wd0\hbox to 0pt{\hss$#1\mid$\hss}
\lower.9\ht0\hbox to 0pt{\hss$#1\smile$\hss}\kern\wd0}

\def\ind{\mathop{\mathpalette\Ind{}}}

\def\notind#1#2{#1\setbox0=\hbox{$#1x$}\kern\wd0
\hbox to 0pt{\mathchardef\nn=12854\hss$#1\nn$\kern1.4\wd0\hss}
\hbox to 0pt{\hss$#1\mid$\hss}\lower.9\ht0 \hbox to 0pt{\hss$#1\smile$\hss}\kern\wd0}

\def\nind{\mathop{\mathpalette\notind{}}}


\maketitle

\begin{abstract}
We show that if $G$ is a finitely generated group such that its profinite completion $\widehat{G}$ is ``far from being projective'' (that is the kernel of the universal Frattini cover of $\widehat{G}$ is not a small profinite group), then the class of existentially closed $G$-actions on fields is not elementary. Since any infinite, finitely generated, virtually free, and not free group is ``far from being projective'', the main result of this paper corrects an error in our paper \emph{Model theory of fields with virtually free group actions}, Proc. London Math. Soc., (2) 118 (2019), 221--256 by showing the negation of Theorem 3.26 in that paper.
\end{abstract}

\section{Introduction}
The aim of this paper is to correct an error which appeared in our paper \cite{BK}. Unfortunately, this error is extremely serious: in short, in the current paper we prove the \emph{negation} of \cite[Theorem 3.26]{BK}. In fact, our main result (Theorem \ref{main}) implies a ``strong negation'' of \cite[Theorem 3.26]{BK}, in the sense which is explained below. The statement \cite[Theorem 3.26]{BK} says that if $G$ is a finitely generated and virtually free group, then the theory of actions of $G$ on fields has a model companion. This result had been previously known in the cases when $G$ is free or finite. Theorem \ref{main} implies that if $G$ is finitely generated, infinite, virtually free, and not free, then the theory of actions of $G$ on fields does \emph{not} have a model companion. Therefore, Theorem \ref{main} (more precisely: Corollary \ref{strneg}) can be considered as a ``strongest possible'' negation of \cite[Theorem 3.26]{BK}.

We would like to describe briefly here the content of \cite{BK} as we see it now \emph{after} realizing our mistake. There are three main statements in \cite{BK}:
\begin{enumerate}
  \item a statement about companionability of actions of virtually free groups on fields (\cite[Theorem 3.26]{BK}),
  \item computations of certain profinite groups (\cite[Theorem 4.6]{BK}),
  \item non-companionability of actions of $\Zz\rtimes \Zz$ on fields (\cite[Corollary 5.7]{BK}).
\end{enumerate}
The first statement is false (see Corollary \ref{strneg}). We use the second result above in the current paper to show the negation of \cite[Theorem 3.26]{BK}. We generalize the third result above in the current paper to the case of nilpotent groups (see Corollary \ref{nilpans}).

The mathematical reason for the error we made in \cite{BK} can be explained in very simple terms: a tensor product of domains need not be a domain (e.g. $\Cc\otimes_{\Rr}\Cc\cong \Cc\times \Cc$)! In the proof of \cite[Theorem 3.26]{BK}, we implicitly (and incorrectly) assumed that a fiber product of $K$-irreducible algebraic varieties is again $K$-irreducible, which need not be true even when $K$ is algebraically closed. This error has its roots already in the Introduction to \cite{BK}, where we write: ``Our prolongation process may be seen as a constructive explanation ...'', however the constructive procedure described in \cite{BK} gives only a canonical difference \emph{ring} extension, where the bigger ring need not be a domain. We can get a difference quotient which is a domain, but finding this quotient corresponds to finding an appropriate difference prime ideal and this is not constructive, since it requires Zorn's Lemma.

In this paper, we show that the class of existentially closed $G$-fields, for some finitely generated groups $G$, is not elementary by showing that such $G$-fields are not bounded (as fields), but they still have absolute Galois groups of bounded cardinality. Since a first-order theory cannot axiomatize any class of infinite structures of bounded cardinality, assuming that a model companion exists leads to a contradiction. More precisely, we show (Theorem \ref{main}) that if $G$ is finitely generated and the profinite completion $\widehat{G}$ is ``far from being projective'' (meaning that the kernel of the universal Frattini cover of $\widehat{G}$ is not small, see Definition \ref{defkg}), then the theory of actions of $G$ on fields has no model companion. It was shown in \cite{BK} that infinite, finitely generated, virtually free, and not free groups are ``far from being projective'', so Theorem \ref{main} implies that for such groups $G$, the theory of actions of $G$ on fields has no model companion. However, Theorem \ref{main} covers many more cases, for example the group $\Zz\times \Zz$ is ``far from being projective'', so, as a special case, we also give a new proof of a rather mysterious Hrushovski's result about non-companionability of the theory of partial difference fields, that is fields with two commuting automorphisms.

In this paper, we give a counterexample (quite an unexpected one) to \cite[Conjecture 5.9]{BK}. We also give a counterexample (see Remark \ref{remque}(2)) to a conjecture of Hoffmann (\cite[Conjecture 5.2]{Hoff3}) and to a conjecture about relations between the free product of groups and companionability of the corresponding theories of their actions on fields (see Remark \ref{remque}(3)). Using \cite[Theorem 1]{ols}, we also give a counterexample to \cite[Theorem 6]{Sjo} (see Remark \ref{remabs}(3)).

The paper is organized as follows. In Section \ref{sec2}, we collect the necessary definitions and results which are needed for the sequel. In Section \ref{sec3}, we show our main non-companionability results. In Section \ref{sec4}, we deal with the nilpotent and the free product cases and we also summarize what we know about the companionability of the theories of group actions on fields.

We would like to thank Alexander Ivanov for fruitful discussions about geometric group theory and Alexander Ol'shanskii for pointing out to us the example of the group $G$ which appears in Remark \ref{remabs}(3).

\section{Absolute Galois groups of $G$-fields}\label{sec2}
In this section, we set our notation and present the necessary notions and results.
Let $G$ be an arbitrary group.  By
$$\widehat{G}:=\underset{H\trianglelefteqslant_f G}{\li }G/H,$$
where $H$ ranges over normal subgroups of $G$ of finite index, we denote the \emph{profinite completion} of $G$ considered as a profinite topological group.

We recall (see \cite[Definition 22.5.1]{FrJa}) that a continuous epimorphism of profinite groups $f:\mathcal{G}\to \mathcal{H}$ is a \emph{Frattini cover}, if for any closed subgroup $\mathcal{G}_0\leqslant \mathcal{G}$, we have that $\mathcal{G}_0=\mathcal{G}$ if and only if $f(\mathcal{G}_0)=\mathcal{H}$. A Frattini cover $f:\mathcal{G}\to \mathcal{H}$ is called a \emph{universal Frattini cover}, if the profinite group $\mathcal{G}$ is \emph{projective} (see \cite[Proposition 22.6.1]{FrJa}). A universal Frattini cover $f:\mathcal{G}\to \mathcal{H}$ is unique up to a topological isomorphism over $\mathcal{H}$ and if $f:\mathcal{G}\to \mathcal{H}$ is a universal Frattini cover, then we denote its domain $\mathcal{G}$ by $\widetilde{\mathcal{H}}$. A profinite group $\mathcal{G}$ is \emph{small}, if for any $n>0$, there are only finitely many open normal subgroups of $\mathcal{G}$ of index $n$. If $\mathcal{G}$ is a profinite group, then the \emph{rank} of $\mathcal{G}$ (denoted $\mathrm{rk}(\mathcal{G})$) is the smallest cardinal $\kappa$ such that there is $A\subseteq \mathcal{G}$ of cardinality $\kappa$ such that $A$ converges to $1\in \mathcal{G}$ and the subgroup generated by $A$ is dense in $\mathcal{G}$ (see \cite[Section 17.1]{FrJa}).

In Section \ref{sec4}, we will use  the following notions and facts from the theory of profinite groups (proofs can be found in \cite[Chapter 22.9]{FrJa}). The classical Sylow theory for finite groups generalizes to the profinite case after replacing the notion of a $p$-subgroup with the notion of a closed pro-$p$ subgroup. In particular, for a prime $p$ and a profinite group $\mathcal{G}$, $p$-Sylow subgroups of $\mathcal{G}$ exist and they are conjugate. We also have the corresponding results about pronilpotent groups: a profinite group is pronilpotent if and only if it is the product of its unique $p$-Sylow subgroups. If $\mathcal{G}$ is a pronilpotent group and $p$ is a prime number, then we denote by $\mathcal{G}_p$ the unique $p$-Sylow subgroup of $\mathcal{G}$.  By ``cl'', we denote the topological closure (in an ambient profinite group). For a prime number $p$ and a cardinal number $\kappa$, we denote the free pro-$p$ group of rank $\kappa$ by $\widehat{F}_{\kappa}(p)$ (see \cite[Remark 17.4.7]{FrJa}).

We introduce below a notation for the most important profinite groups in this paper.
\begin{definition}\label{defkg}
Let $G$ be an arbitrary group and $\mathcal{G}$ be a profinite group.
\begin{enumerate}
\item We denote by
$$\mathcal{K}_{\mathcal{G}}:=\ker\left(\widetilde{\mathcal{G}}\ra \mathcal{G}\right)$$
the kernel of the universal Frattini cover of $\mathcal{G}$.

\item We also use the following notation:
$$\mathcal{K}_G:=\mathcal{K}_{\widehat{G}}.$$

\item We sometimes say that ``$\mathcal{G}$ is far from being projective'', if the profinite group $\mathcal{K}_{\mathcal{G}}$ is not small.
\end{enumerate}
\end{definition}
We give below a few examples of the notions mentioned above. For any $m>0$, we denote by $C_m$ the cyclic group of order $m$ written multiplicatively.
\begin{example}\label{example}
Let $p$ be a prime number and $n>0$.
\begin{enumerate}
\item We have:
$$\widehat{C_p}=C_p,\ \ \ \widetilde{C_p}=\Zz_p.$$
We obtain that:
$$\mathcal{K}_{C_p}=p\Zz_p\cong \Zz_p,$$
which is a small profinite group. One can show that in general finite groups are \emph{not} ``far from being projective''.

\item The profinite completion of the free group $\widehat{F_n}$ is a free profinite group, therefore it is projective. Hence, we obtain:
$$\widetilde{\widehat{F_n}}=\widehat{F_n},\ \ \ \ \mathcal{K}_{F_n}=\{1\}.$$

\item It is shown in \cite[Section 9]{Sjo} that
$$\mathcal{K}_{\Zz\times \Zz}\cong \prod_{p\text{:prime}}\widehat{F}_{\omega}(p),$$
therefore $\widehat{\Zz\times \Zz}$ is ``far from being projective''. We will generalize this result to the nilpotent case in Section \ref{sec4}.

\item Suppose that $G$ is an infinite, finitely generated, virtually free, and not free group. By \cite[Theorem 4.6]{BK}, the profinite group $\mathcal{K}_G$ is not small, so $G$  is ``far from being projective''.
\end{enumerate}
\end{example}
For a field $F$, we denote by $F^{\sep}$ a fixed separable closure of $F$, by $F^{\alg}$ a fixed algebraic closure of $F$, and by
$$\gal(F):=\gal(F^{\sep}/F)$$
the absolute Galois group of $F$ (considered as a profinite group). We would like to single out a standard result about actions on the absolute Galois groups.
\begin{fact}\label{galcont}
Let $K$ be a field and
$$\aut_{\{K\}}\left(K^{\sep}\right):=\{\sigma\in \aut\left(K^{\sep}\right)\ |\ \sigma(K)=K\}\leqslant \aut\left(K^{\sep}\right).$$
Then, $\gal(K)\trianglelefteqslant \aut_{\{K\}}(K^{\sep})$ and the conjugation induces an action of $\aut_{\{K\}}(K^{\sep})$ on $\gal(K)$ by continuous automorphisms.
\end{fact}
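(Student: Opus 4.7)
The plan is to verify the two assertions separately, both by a direct computation using only the definitions.

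First, I would observe that if $\sigma \in \aut_{\{K\}}(K^{\sep})$ then $\sigma^{-1}(K) = K$ as well, since $\sigma|_K$ is a bijection of $K$ onto itself, hence a field automorphism of $K$. With this in hand, normality is immediate: for $\tau \in \gal(K)$ and $a \in K$ we have $\sigma^{-1}(a) \in K$, so $\tau(\sigma^{-1}(a)) = \sigma^{-1}(a)$, and therefore $\sigma\tau\sigma^{-1}(a) = a$. Thus $\sigma\tau\sigma^{-1}$ fixes $K$ pointwise, which gives $\sigma\tau\sigma^{-1} \in \gal(K)$ and so $\gal(K) \trianglelefteqslant \aut_{\{K\}}(K^{\sep})$. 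Conjugation then defines a group action of $\aut_{\{K\}}(K^{\sep})$ on $\gal(K)$ by group automorphisms in the usual way.

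It remains to check that, for each $\sigma \in \aut_{\{K\}}(K^{\sep})$, the conjugation map $c_\sigma : \gal(K) \to \gal(K)$, $\tau \mapsto \sigma\tau\sigma^{-1}$, is continuous. Since $c_\sigma$ is a group homomorphism, it is enough to verify continuity at $1$. A neighborhood basis of $1$ in $\gal(K)$ consists of the subgroups $\gal(K^{\sep}/L)$ as $L$ ranges over finite subextensions of $K^{\sep}/K$. Fix such an $L$ and set $M := \sigma^{-1}(L)$. Because $\sigma^{-1}|_K$ is a field automorphism of $K$, the restriction $\sigma^{-1}|_L : L \to M$ is a field isomorphism lying over $\sigma^{-1}|_K$; in particular $K \subseteq M \subseteq K^{\sep}$ and $[M:K] = [L:K] < \infty$, so $\gal(K^{\sep}/M)$ is an open subgroup of $\gal(K)$.

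I would then verify $c_\sigma\bigl(\gal(K^{\sep}/M)\bigr) \subseteq \gal(K^{\sep}/L)$: if $\tau$ fixes $M$ pointwise and $\ell \in L$, then $\sigma^{-1}(\ell) \in M$ gives $\tau(\sigma^{-1}(\ell)) = \sigma^{-1}(\ell)$, hence $c_\sigma(\tau)(\ell) = \sigma(\sigma^{-1}(\ell)) = \ell$. This proves continuity of $c_\sigma$ at the identity, and hence everywhere. The only delicate point, which is not really an obstacle, is to observe that $\sigma^{-1}(L)/K$ is automatically a finite subextension of $K^{\sep}/K$; this is handled by the remark that $\sigma^{-1}$ induces a $\sigma^{-1}|_K$-semilinear bijection between $L$ and $\sigma^{-1}(L)$.
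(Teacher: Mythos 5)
Your proof is correct. The paper states this result as a Fact without proof (it is flagged as ``a standard result''), so there is no argument to compare against; your direct verification --- normality of $\gal(K)$ in $\aut_{\{K\}}(K^{\sep})$ via the observation that $\sigma^{-1}(K)=K$, and continuity of each conjugation $c_\sigma$ checked at the identity by pulling back a basic open subgroup $\gal(K^{\sep}/L)$ to $\gal(K^{\sep}/\sigma^{-1}(L))$ --- is exactly the standard argument the authors are implicitly invoking.
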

Let us fix a group $G$. By a \emph{$G$-field}, we mean a field together with an action of $G$ by field automorphisms (see \cite{HK3}, \cite{BK}, and  \cite{BK2}). A $G$-field $K$ is \emph{$G$-closed} if the action of $G$ does not extend to any proper algebraic extension of $K$. The class of $G$-fields coincides with the class of models of the obvious theory of $G$-fields in the language of rings extended by unary function symbols for the elements of $G$. We say that a $G$-field is \emph{existentially closed} (abbreviated \emph{e.c.}), if it is an existentially closed model of the theory of $G$-fields. If the class of e.c. $G$-fields is elementary, then we denote the theory of this class (a model companion of the theory of $G$-fields) by $G$-TCF and we say that ``$G$-TCF exists''. Otherwise, we say that ``$G$-TCF does not exist''.

We recall below some results about absolute Galois groups of $G$-fields. The statements in the theorem below originate from \cite{Sjo} (Theorems 4,5, and 6 in \cite{Sjo}). We use the formulations from \cite{BK2} (Lemma 2.7, Corollary 2.13, and Corollary 2.14 in \cite{BK2}), where the assumptions are a bit different and the proofs are more elaborate. We give below a counterexample to \cite[Theorem 6]{Sjo} (see Remark \ref{remabs}(3)).

\begin{theorem}[\cite{Sjo} and \cite{BK2}]\label{absgf}
Assume that the group $G$ is finitely generated. Let $K$ be an e.c. $G$-field and $C$ be the subfield of $G$-invariants.
\begin{enumerate}
\item We have:
  $$\gal(C^{\sep}/K\cap C^{\sep})\cong \widehat{G}.$$

\item We have:
$$\gal(C)\cong \widetilde{\widehat{G}}.$$

\item There is a natural continuous epimorphism (see Definition \ref{defkg}):
$$\gal(K)\twoheadrightarrow \mathcal{K}_G.$$
\end{enumerate}
\end{theorem}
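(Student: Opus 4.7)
The overall plan is to unpack the Galois-theoretic meaning of existential closedness in three stages, matching (1)--(3). Since $G$ fixes $C$ pointwise, it preserves $C_{0}:=K\cap C^{\sep}$ (the separable algebraic closure of $C$ inside $K$), with fixed field still $C$. This gives a continuous map $\widehat{G}\to\gal(C_{0}/C)$ via the profinite completion, which I intend to use as the backbone of the whole argument.

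The main input from existential closedness, used throughout, is a realisation statement: every finite Galois $G$-equivariant extension $L/C$, i.e.\ every epimorphism $G\twoheadrightarrow\gal(L/C)$ together with the compatible $G$-action on $L$, embeds into $K$ over $C$ as a $G$-field. The construction uses a suitable $G$-invariant prime ideal in the natural $G$-ring $K\otimes_{C}L$, yielding a $G$-field extension of $K$ in which the system cutting out $L$ has a solution; e.c.-ness of $K$ pulls a copy of $L$ back into $K$, and hence into $C_{0}$. Combined with the easy converse that every finite Galois subextension of $C_{0}/C$ is $G$-stable and produces such an epimorphism, passage to the limit identifies $\gal(C_{0}/C)\cong\widehat{G}$; read through the Galois correspondence between closed subgroups of $\gal(C)$ and intermediate extensions, this yields the content of (1).

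For (2), I next want to prove that the fixed field $C$ is PAC. Given a geometrically integral $C$-variety $V$, e.c.-ness produces a $K$-point of $V$, and the transitive $G$-action on the factors of $K\otimes_{C}L$ for finite Galois subextensions $L\subseteq C_{0}$ enables Galois descent to produce a $C$-rational point. By Ax's theorem, $\gal(C)$ is then projective. To identify $\gal(C)\twoheadrightarrow\widehat{G}$ with the universal Frattini cover, I would verify the Frattini condition: every proper closed subgroup $H\lneqq\gal(C)$ that still surjects onto $\widehat{G}$ would correspond to an intermediate field $F\supsetneq C$ with $F\cap C_{0}=C$, but any finite Galois subextension of $F/C$ is again $G$-stable (since $G$ acts on $C^{\sep}$) and, by the realisation argument, must already lie in $C_{0}$, forcing $F=C$. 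This proves (2). For (3), the restriction $\gal(K)\to\gal(C)$ obtained by restricting $K$-automorphisms of $K^{\sep}$ to $C^{\sep}\subseteq K^{\sep}$ has image $\gal(C^{\sep}/C_{0})$, which under the identifications of (1)--(2) is precisely $\mathcal{K}_{G}=\ker(\widetilde{\widehat{G}}\twoheadrightarrow\widehat{G})$; surjectivity onto this image follows from PAC-ness of $C$ together with linear disjointness of $K$ and $C^{\sep}$ over $C_{0}$.

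The principal obstacle I anticipate is the Frattini step, which is where model-theoretic existential closedness interacts most subtly with the profinite-group-theoretic notion of a universal Frattini cover: the realisation argument has to be calibrated precisely enough to forbid any Galois extension of $C$ that is disjoint from $C_{0}$. A related technical subtlety lies in the PAC argument, where one must track carefully how $G$ permutes the factors of $K\otimes_{C}L$ and then descend $K$-points of $V$ down to $C$-points.
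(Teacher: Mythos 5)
First, a point of reference: the paper does not prove Theorem \ref{absgf} at all --- it imports the statement from \cite{Sjo} and from \cite{BK2} (Lemma 2.7, Corollaries 2.13 and 2.14 there), so your proposal can only be compared with those cited proofs. Your overall architecture matches them: realise every finite quotient of $G$ inside $C_0:=K\cap C^{\sep}$ using existential closedness to get $\gal(C_0/C)\cong\widehat{G}$; show that $C$ is PAC, hence $\gal(C)$ is projective by Ax's theorem; verify that the restriction map $\gal(C)\twoheadrightarrow\gal(C_0/C)$ is a Frattini cover; and obtain (3) by restricting elements of $\gal(K)$ to $C^{\sep}$. (You have also, reasonably, read item (1) as a statement about $\gal\bigl((K\cap C^{\sep})/C\bigr)$, which is the intended content.)

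The genuine gap is the Frattini step, which you yourself flag as the principal obstacle. You propose to rule out a proper closed $H\leqslant\gal(C)$ surjecting onto $\gal(C_0/C)$ by arguing that every finite Galois subextension of $F=\fix(H)$ over $C$ must already lie in $C_0$. This does not work, for two reasons. First, the Frattini condition reduces to minimal proper finite separable extensions $F/C$ (fixed fields of maximal open subgroups), and such $F$ are typically not normal over $C$ and may have \emph{no} nontrivial Galois subextensions (e.g.\ when the Galois closure has group $S_5$), so your argument concludes nothing; moreover $G$ does not canonically act on $C^{\sep}$, only on $K$ and hence on $C_0$. Second, and more fundamentally, the realisation argument can only show that prescribed $G$-equivariant extensions \emph{embed into} $C_0$; it cannot \emph{forbid} extensions of $C$ disjoint from $C_0$. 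The mechanism that actually carries this step in \cite{BK2} and \cite{HK3} is the one you never invoke: an e.c.\ $G$-field is $G$-closed, so if a proper finite separable extension $F/C$ satisfied $F\not\subseteq K$, one passes to its Galois closure $\tilde{F}$, notes that $\tilde{F}\cap K$ is a $G$-stable Galois subextension over which $\tilde{F}$ and $K$ are linearly disjoint, and extends the $G$-action to the proper algebraic extension $\tilde{F}\cdot K\supsetneq K$ via the tensor-product construction --- a contradiction, which forces every minimal finite separable extension of $C$ into $K\cap C^{\sep}$, i.e.\ exactly the Frattini condition. Two smaller soft spots: your justification of the realisation statement via ``a suitable $G$-invariant prime ideal of $K\otimes_C L$'' glosses over the possibility that $G$ permutes the primes of this ring without fixing any (precisely the tensor-product subtlety this paper corrects in \cite{BK}); and the PAC argument is cleaner if, rather than descending a $K$-point, you realise a $G$-invariant generic point of $V$ in $\mathrm{Frac}(C(V)\otimes_C K)$ and apply existential closedness to the finitely many fixed-point conditions (this is where finite generation of $G$ enters).
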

\begin{remark}\label{remabs}
We would like to comment here on Theorem \ref{absgf}(3) and its relation to \cite[Theorem 6]{Sjo}.
\begin{enumerate}
\item It is claimed in \cite[Theorem 6]{Sjo} that after assuming that $G$ is finitely presented, the natural map $\gal(K)\twoheadrightarrow \mathcal{K}_G$
is an isomorphism. We show in Item (3) below that it need not be true. As it was discussed already in \cite[Remark 2.18(2)]{BK2}, this map is an isomorphism for a finite group $G$ and for a free group $G$.

\item We can partially confirm the statement from \cite[Theorem 6]{Sjo} for some particular groups, we give details in Remark \ref{c2c2}.

\item Let us consider the group $C_2\ast C_3=\langle a,b\rangle$ which is hyperbolic (this is folklore, see e.g. \cite[Proposition 3.2.A]{essgp} or \cite[Corollary 3]{khmy}). By \cite[Theorem 1]{ols}, the group $C_2\ast C_3$ has a finitely presented infinite quotient $G$ such that $\widehat{G}$ is trivial, hence $\mathcal{K}_G$ is trivial as well. We will see below that this group $G$ is a counterexample to \cite[Theorem 6]{Sjo}.

  Let $b'$ be the image of $b$ in $G$. Clearly, $b'$ still has order $3$, since otherwise $G$ would have order at most $2$. If \cite[Theorem 6]{Sjo} was true for this $G$, then we would obtain a faithful action of $G$ on an algebraically closed field (an e.c. $G$-field), contradicting the Artin-Schreier Theorem, since $b'$ would give an automorphism of order $3$ of an algebraically closed field.

\end{enumerate}
\end{remark}

\section{Groups with large universal Frattini kernels}\label{sec3}
In this section, we show the main result of this paper (Theorem \ref{main}) about non-existence of the theory $G$-TCF in the case when the profinite group $\mathcal{K}_G$ is not small (see Section \ref{sec2} for the necessary notions).

We note below an obvious result.
\begin{lemma}\label{3}
Let $\mathfrak{G}$ be a topological group, $H$ be a countable group acting on $\mathfrak{G}$ by continuous automorphisms and $A\subseteq \mathfrak{G}$ be a countable subset. Let us assume that $\mathfrak{G}$ is not topologically countably generated (that is: there is no countable subgroup of $\mathfrak{G}$, which is dense in $\mathfrak{G}$). Then, there is a subgroup $\mathcal{G}\leqslant \mathfrak{G}$ such that:
\begin{enumerate}
\item for each $h\in H$,  we have $h(\mathcal{G})=\mathcal{G}$;

\item $A\subseteq \mathcal{G}$;

\item $\mathcal{G}$ is closed and proper in $\mathfrak{G}$.
\end{enumerate}
\end{lemma}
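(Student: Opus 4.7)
The strategy is to replace $A$ by a countable, $H$-invariant set and then pass to the closed subgroup it topologically generates. Concretely, I would first form the $H$-orbit of $A$,
\[
B := \{h(a) \mid h \in H,\ a \in A\} \subseteq \mathfrak{G}.
\]
Since both $H$ and $A$ are countable, the set $B$ is countable, and by construction $h(B) \subseteq B$ for every $h \in H$ (indeed $h(h'(a)) = (hh')(a) \in B$). Let $\langle B \rangle$ denote the abstract subgroup of $\mathfrak{G}$ generated by $B$, which is still countable, and define
\[
\mathcal{G} := \overline{\langle B \rangle},
\]
the topological closure of $\langle B \rangle$ inside $\mathfrak{G}$.

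Condition (2) is immediate since $A \subseteq B \subseteq \langle B\rangle \subseteq \mathcal{G}$, and $\mathcal{G}$ is closed in $\mathfrak{G}$ by construction. For condition (1), fix $h \in H$. Since $h$ acts as a group automorphism of $\mathfrak{G}$ with $h(B) \subseteq B$, it sends $\langle B \rangle$ into $\langle h(B) \rangle \subseteq \langle B \rangle$; and since $h$ is continuous, it maps $\overline{\langle B \rangle}$ into $\overline{\langle B \rangle}$, i.e.\ $h(\mathcal{G}) \subseteq \mathcal{G}$. Applying the same argument to $h^{-1} \in H$ yields the reverse inclusion, so $h(\mathcal{G}) = \mathcal{G}$. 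For the properness part of (3), suppose for contradiction that $\mathcal{G} = \mathfrak{G}$; then $\langle B \rangle$ would be a countable subgroup of $\mathfrak{G}$ which is dense in $\mathfrak{G}$, contradicting the assumption that $\mathfrak{G}$ is not topologically countably generated.

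There is no genuine obstacle in this argument; the only thing to verify is that the two countability operations used (taking the $H$-orbit of $A$, and then generating a subgroup algebraically) both preserve countability, so that the non-countably-generated hypothesis on $\mathfrak{G}$ can be invoked at the end. The role of continuity of the $H$-action is only to ensure that $H$-invariance of the algebraic subgroup $\langle B \rangle$ carries over to its closure, which is standard.
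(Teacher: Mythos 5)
Your proof is correct and is essentially identical to the paper's: the paper defines $\mathcal{G}:=\mathrm{cl}\left(\left\langle \bigcup_{h\in H}h(A)\right\rangle\right)$ and states that the verification is straightforward, which is exactly the construction and the checks you carry out in detail.
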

\begin{proof}
It is straightforward to check that the following subgroup: 
$$\mathcal{G}:=\mathrm{cl}\left(\left\langle \bigcup_{h\in H} h(A)\right\rangle\right)$$ 
satisfies Items $(1)$--$(3)$ above.
\end{proof}
\begin{remark}
The above notion of ``topologically countably generated'' actually coincides with the notion of ``separable'' (having a dense countable subset), which also appears at the end of the proof of Theorem \ref{c2c2}.
\end{remark}
\begin{lemma}\label{4}
Let $G$ be a finitely generated group such that the profinite group $\mathcal{K}_G$ (see Definition \ref{defkg})
is not small and let $K$ be an e.c. $G$-field. Then $K$ is not bounded, that is there is $n>0$ such that $K$ has infinitely many extensions of degree $n$ (inside $K^{\sep}$).
\end{lemma}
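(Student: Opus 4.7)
The plan is to deduce the failure of boundedness directly from Theorem \ref{absgf}(3), which gives a continuous epimorphism $\pi : \gal(K) \twoheadrightarrow \mathcal{K}_G$ whenever $G$ is finitely generated and $K$ is an e.c.\ $G$-field. The strategy is to transport a witness of ``non-smallness'' from $\mathcal{K}_G$ up to $\gal(K)$, and then translate this through Galois theory into infinitely many separable extensions of a fixed degree of $K$.

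First, I would unpack the hypothesis. By definition of smallness, $\mathcal{K}_G$ not being small means that there exists some $n_0 > 0$ for which $\mathcal{K}_G$ has infinitely many open normal subgroups of index $n_0$; let $\{N_i\}_{i \in I}$ be such an infinite family with $I$ infinite. Next, I would lift this family through $\pi$: for each $i$, set $M_i := \pi^{-1}(N_i)$. Each $M_i$ is an open normal subgroup of $\gal(K)$. Because $\pi$ is surjective, the correspondence theorem gives $\pi(M_i) = N_i$, so the $M_i$'s are pairwise distinct, and the index of $M_i$ in $\gal(K)$ equals the index of $N_i$ in $\mathcal{K}_G$, namely $n_0$. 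Thus $\gal(K)$ has infinitely many open normal subgroups of index $n_0$.

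Finally, I would invoke the Galois correspondence. Open normal subgroups of $\gal(K)$ of index $n_0$ correspond bijectively to finite Galois extensions of $K$ inside $K^{\sep}$ of degree $n_0$, via $M \mapsto (K^{\sep})^{M}$. Hence $K$ has infinitely many Galois (in particular, separable) extensions of degree $n_0$ inside $K^{\sep}$, and $K$ is not bounded.

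There is essentially no serious obstacle here; the main content of the lemma is already packaged inside Theorem \ref{absgf}(3). The one small conceptual point worth stating cleanly in the write-up is the observation that ``not small'' is preserved under continuous surjections of profinite groups (via preimages of open normal subgroups of a fixed index), since this is exactly the mechanism that converts information about $\mathcal{K}_G$ into information about $\gal(K)$.
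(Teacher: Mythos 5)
Your argument is correct and is essentially the paper's own proof: the paper simply observes that a continuous quotient of a small profinite group is small and invokes Theorem \ref{absgf}(3), which is exactly the mechanism you unpack (pulling back infinitely many open normal subgroups of fixed index along the epimorphism $\gal(K)\twoheadrightarrow \mathcal{K}_G$ and then applying the Galois correspondence). No substantive difference.
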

\begin{proof}
Since any continuous quotient of a small profinite group is again small, the result follows directly from Theorem \ref{absgf}(3).
\end{proof}
\begin{remark}\label{reminf}
Let $F$ be a field and $n>0$. It can be easily checked that the following properties of $F$ are equivalent.
\begin{enumerate}
  \item The field $F$ has infinitely many extensions of degree $n$ in $F^{\sep}$.

  \item The field $F$ has infinitely many isomorphism classes of separable extensions of degree $n$.

  \item There are infinitely many separable irreducible polynomials $f_1,f_2,\ldots \in F[X]$ of degree $n$ such that for all $i\neq j$ and for all $\alpha,\beta\in F^{\sep}$, if $f_i(\alpha)=0=f_j(\beta)$ then $F(\alpha)\neq F(\beta)$.
\end{enumerate}
\end{remark}
\begin{lemma}\label{5}
Let $G$ be a countable group and $K$ be a $G$-field. Assume that $\gal(K)$ is not countably topologically generated. Then $K$ is not $G$-closed.
\end{lemma}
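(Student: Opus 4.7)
The plan is to build a proper closed subgroup $\mathcal{G}\leqslant\gal(K)$ using Lemma~\ref{3} and take its fixed field $L:=(K^{\sep})^{\mathcal{G}}$ as the proper algebraic extension to which the $G$-action extends. The key trick will be to feed Lemma~\ref{3} both a sensible conjugation action on $\gal(K)$ and a carefully chosen countable subset $A\subseteq\gal(K)$ that measures the obstruction to a set-theoretic lift being a group homomorphism; swallowing $A$ into $\mathcal{G}$ will turn the lift into an honest action on $L$.

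In detail, let $\rho\colon G\to\aut(K)$ denote the $G$-action on $K$. Since every automorphism of $K$ extends to $K^{\sep}$, I can pick a set-theoretic lift $s\colon G\to\aut_{\{K\}}(K^{\sep})$ of $\rho$ with $s(1)=\id$. By Fact~\ref{galcont} the elements $s(g)$ act continuously on $\gal(K)$ by conjugation, so $H:=\langle s(g)\mid g\in G\rangle\leqslant\aut_{\{K\}}(K^{\sep})$ is a countable group acting on $\gal(K)$ by continuous automorphisms. The defect of $s$ being multiplicative is recorded by the cocycle values
$$c(g_1,g_2):=s(g_1)\,s(g_2)\,s(g_1g_2)^{-1}\in\gal(K),\qquad g_1,g_2\in G,$$
which form a countable subset $A\subseteq\gal(K)$.

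Next, I apply Lemma~\ref{3} to $\mathfrak{G}=\gal(K)$, to $H$, and to $A$; the hypothesis that $\gal(K)$ is not topologically countably generated is exactly what the lemma requires. This yields a proper closed subgroup $\mathcal{G}\leqslant\gal(K)$ satisfying $A\subseteq\mathcal{G}$ and $s(g)\mathcal{G}s(g)^{-1}=\mathcal{G}$ for every $g\in G$. Setting $L:=(K^{\sep})^{\mathcal{G}}$, the Galois correspondence gives a proper separable algebraic extension $L/K$.

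To finish, I verify that $g\mapsto s(g)|_L$ is a homomorphism $G\to\aut(L)$ extending $\rho$. Normalization of $\mathcal{G}$ by $s(g)$ ensures $s(g)(L)=L$, so each $s(g)|_L$ is an automorphism of $L$ prolonging $\rho(g)$. Restricting the identity $s(g_1)s(g_2)=c(g_1,g_2)\,s(g_1g_2)$ to $L$ and using that $c(g_1,g_2)\in\mathcal{G}$ fixes $L$ pointwise yields $s(g_1)|_L\circ s(g_2)|_L=s(g_1g_2)|_L$. Hence the $G$-action extends to $L$, contradicting $G$-closedness of $K$. The main obstacle is precisely this dual use of Lemma~\ref{3}: demanding $H$-invariance alone would let $s(g)$ permute $L$ but would not force the restrictions to compose correctly, and forcing $A\subseteq\mathcal{G}$ is what converts the cocycle into a coboundary on $L$.
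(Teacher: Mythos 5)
Your proof is correct and follows essentially the same strategy as the paper: apply Lemma~\ref{3} to $\gal(K)$ with $H$ a countable lift of $G$ acting by conjugation and $A$ a countable set recording the failure of the lift to be a homomorphism, then pass to the fixed field of the resulting proper closed subgroup. The only (cosmetic) difference is that you encode the obstruction via the factor set $c(g_1,g_2)=s(g_1)s(g_2)s(g_1g_2)^{-1}$ of a set-theoretic section, whereas the paper lifts the generators of a presentation and takes $A$ to be the evaluated relation words.
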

\begin{proof}
Let us choose a presentation:
$$G=\langle \tau_i:i<\omega\ |\ w_j(\bar{\tau}):j<\omega\rangle,$$
where $w_j$ are words and $\bar{\tau}=(\tau_i)_{i<\omega}$. We identify $G$ with a subgroup of $\mathrm{Aut}(K)$, since without loss of generality the action of $G$ on $K$ is faithful. For any $\sigma\in G$, we choose $\sigma'\in \mathrm{Aut}_{\{K\}}(K^{\sep})$ (see Fact \ref{galcont} for the notation) extending $\sigma$ and we define:
$$\bar{\tau}':=(\tau_i')_{i<\omega},\ \ \ A:=\{w_j(\bar{\tau}')\ :\ j<\omega\}\subseteq \mathrm{Aut}_{\{K\}}(K^{\sep}),\ \ \
H:= \langle \bar{\tau}' \rangle  \leqslant \mathrm{Aut}_{\{K\}}(K^{\sep}).$$
Since for each $j<\omega$, we have:
$$w_j(\bar{\tau}')|_K=w_j(\bar{\tau})=\id_K,$$
we obtain that $A\subseteq \gal(K)$. Therefore, using Fact \ref{galcont}, we can apply Lemma \ref{3} for $\mathfrak{G}=\gal(K)$ and $H,A$ as above. By Lemma \ref{3}, we obtain a closed proper subgroup $\mathcal{G}<\gal(K)$ such that  $A\subseteq \mathcal{G}$ and for each $h\in H$, we have $h\mathcal{G}h^{-1}=\mathcal{G}$. Let us take $M:=(K^{\sep})^{\mathcal{G}}$. Since for each $h\in H$, we have $h\mathcal{G}h^{-1}=\mathcal{G}$, we obtain that $H$ acts on $M$ by field automorphisms. Since $A\subseteq \mathcal{G}$, the above action of $H$ on $M$ yields an action of $G$ on $M$ extending the action of $G$ on $K$. Since $\mathcal{G}$ is a proper subgroup of $\gal(K)$, the algebraic extension $K\subseteq M$ is proper as well, therefore the $G$-field $K$ is not $G$-closed.
\end{proof}
\begin{remark}\label{c2c2}
Our proof of Lemma \ref{5} above has some similarities to Sj\"{o}gren's argument towards \cite[Theorem 6]{Sjo}, which does not hold in general (see Remark \ref{remabs}(3)). Using some additional properties of profinite groups and the ideas from the proof of Lemma \ref{5}, we can show the statement from \cite[Theorem 6]{Sjo} in some specific cases, for example if $G$ is of the form $C_p\ast \ldots \ast C_p$ for a prime $p$ (however, we probably get an abstract isomorphism, rather than showing that the natural epimorphism from Theorem \ref{absgf}(3) is an isomorphism).
\end{remark}
\begin{theorem}\label{main}
Let $G$ be a finitely generated group such that the profinite group $\mathcal{K}_G$
is not small. Then, the theory $G$-TCF does not exist.
\end{theorem}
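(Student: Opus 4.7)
The plan is to derive a contradiction from the assumption that $G$-TCF exists by producing a model whose absolute Galois group has more than $2^{\aleph_0}$ open subgroups; by Lemma \ref{5} such a model is then not $G$-closed, contradicting the fact that every e.c.\ $G$-field is $G$-closed.

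Fix any completion $T$ of $G$-TCF and any model $K_0 \models T$. By Lemma \ref{4}, some $n > 0$ has the property that $K_0$ has infinitely many pairwise non-isomorphic degree-$n$ extensions. By Remark \ref{reminf}(3), the assertion ``there exist $m$ separable monic irreducible degree-$n$ polynomials generating pairwise non-isomorphic extensions'' is first-order expressible in the language of $G$-fields, and by completeness of $T$ it belongs to $T$ for every $m$. Fix a cardinal $\kappa > 2^{\aleph_0}$ and expand the language by tuples $(\bar c_\alpha)_{\alpha < \kappa}$ of new constants encoding the coefficients of monic degree-$n$ polynomials $f_\alpha$. Adjoin axioms stating that each $f_\alpha$ is separable and irreducible, and that for $\alpha \neq \beta$ the polynomials $f_\alpha$ and $f_\beta$ generate non-isomorphic extensions (first-order by Remark \ref{reminf}(3)). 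Every finite fragment is realized in some model of $T$ by the previous step, so by compactness there is a model $K \models T$ in which the $f_\alpha$ witness at least $\kappa$ pairwise non-isomorphic degree-$n$ extensions, whence $\gal(K)$ has at least $\kappa$ open subgroups of index $n$.

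To see $\gal(K)$ is not countably topologically generated, suppose $\Delta \leq \gal(K)$ were a countable dense subgroup. For each open $H \leq \gal(K)$, the intersection $H \cap \Delta$ is dense in $H$ (since $H$ is open and $\Delta$ is dense), so $H = \overline{H \cap \Delta}$ is determined by $H \cap \Delta$. The map $H \mapsto H \cap \Delta$ would then inject the open subgroups of $\gal(K)$ into the subgroups of $\Delta$, bounding the former by $2^{|\Delta|} = 2^{\aleph_0}$ and contradicting $\kappa > 2^{\aleph_0}$. Since $G$ is countable (being finitely generated), Lemma \ref{5} now forces $K$ not to be $G$-closed.

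However, any e.c.\ $G$-field is $G$-closed: given a proper algebraic $G$-extension $K \subsetneq L$ and $a \in L \setminus K$ with minimal polynomial $f \in K[X]$ of degree $> 1$, the existential formula $\exists x\,(f(x) = 0)$ with parameters in $K$ is satisfied in $L$ but has no solution in $K$, contradicting existential closedness. Since $K \models T \supseteq G\text{-TCF}$, $K$ is e.c., yielding the desired contradiction. The main technical point is the compactness step, which requires a first-order schema expressing ``$f_\alpha$ and $f_\beta$ generate non-isomorphic degree-$n$ extensions''; this is supplied by Remark \ref{reminf}(3). The remaining pieces — completeness-based transfer of Lemma \ref{4}, the cardinality bound for open subgroups of a topologically countably generated profinite group, and the standard implication e.c.\ $\Rightarrow$ $G$-closed — are routine.
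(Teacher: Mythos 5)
Your proof is correct and follows essentially the same route as the paper: Lemma \ref{4} gives unboundedness, a compactness argument using the definable relation ``irreducible and generating distinct degree-$n$ extensions'' produces a model of $G$-TCF whose absolute Galois group is not topologically countably generated, and Lemma \ref{5} then contradicts existential closedness. The only cosmetic differences are that you bound the number of open subgroups directly by $2^{\aleph_0}$ via intersection with a countable dense subgroup (the paper instead bounds the cardinality of a separable profinite group by $\beth_2$), and that the uniform first-order expressibility of ``$f_\alpha$, $f_\beta$ generate non-isomorphic extensions'' is supplied by the cited results of \cite{ChaHel} rather than by Remark \ref{reminf} itself, which only records the field-theoretic equivalences.
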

\begin{proof}
By Lemma \ref{4}, any e.c $G$-field is not bounded. We assume that the theory $G$-TCF exists and we will reach a contradiction.
\\
\\
{\bf Claim}
\\
There is a model $K$ of the theory $G$-TCF such that $\gal(K)$ is not topologically countably generated.
\begin{proof}[Proof of Claim]
Let us take any e.c. $G$ field $M$ (so $M$ is a model of $G$-TCF). By Lemma \ref{4}, there is $n>0$ such that $M$ has infinitely many extensions of degree $n$ (inside $K^{\sep}$). For any field $F$ and any $\bar{a}=(a_0,\ldots,a_{n-1})\in F^n$, we set:
$$f_{\bar{a}}:=a_{0}+a_{1}X+\ldots+a_{n-1}X^{n-1}+X^n\in F[X].$$
By \cite[Section 3: (3.3), (3.4), and (3.5)]{ChaHel} and Remark \ref{reminf}, there is a formula $\varphi(\bar{x},\bar{y})$ in the language of rings, where $|\bar{x}|=|\bar{y}|=n$, such that for any field $F$ and any $\bar{a},\bar{b}\in F^n$, we have that $F\models \varphi(\bar{a},\bar{b})$ if and only if the following two conditions hold:
\begin{enumerate}
\item the polynomials $f_{\bar{a}},f_{\bar{b}}$ are irreducible in $F[X]$;

\item for any $\alpha,\beta\in F^{\sep}$, if $f_{\bar{a}}(\alpha)=0=f_{\bar{b}}(\beta)$ then $F(\alpha)\neq F(\beta)$.
\end{enumerate}
For any cardinal number $\kappa$, let us consider the language $L_{\kappa}$ which is the language of $G$-fields extended by
$\kappa$ many $n$-tuples $(\bar{c_i})_{i<\kappa}$ of constant symbols. Let $T_{\kappa}$ be the following $L_{\kappa}$-theory:
$$T_{\kappa}:= \text{$G$-$\mathrm{TCF}$} \cup \{\varphi(\bar{c}_i,\bar{c}_j)\ |\ i<j<\kappa\}.$$
By Compactness Theorem and Items $(1)$, $(2)$ above, the theory $T_{\kappa}$ is consistent. Therefore, there are models of $G$-TCF with arbitrarily large absolute Galois groups, in particular: there is a model $K$ of the theory $G$-TCF such that $\gal(K)$ is not topologically countably generated, since the cardinality of any topologically countably generated profinite group is bounded by $\beth_2=2^{2^{\aleph_0}}$ (see e.g. \cite[Exercise 3.5.14]{progps} or much more generally: the maximum possible cardinality of a separable Hausdorff space is $\beth_2$ as well).
\end{proof}
We take the $G$-field $K$ from Claim. By Lemma \ref{5}, $K$ is not $G$-closed, therefore $K$ is not e.c., a contradiction.
\end{proof}
The result below can be considered as a ``strong negation'' to \cite[Theorem 3.26]{BK}.
\begin{cor}\label{strneg}
Let $G$ be a finitely generated virtually free group. Then, the theory $G$-$\mathrm{TCF}$ exists if and only if $G$ is finite or $G$ is free.
\end{cor}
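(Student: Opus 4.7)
The plan is to prove the corollary by splitting into the two implications, each of which is essentially an invocation of material already at hand.

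For the "if" direction, if $G$ is finite or $G$ is free (on finitely many generators), then $G$-$\mathrm{TCF}$ is known to exist by prior work: the case $G = \Zz$ is the Chatzidakis--Hrushovski theorem ACFA, the general free case is due to Hrushovski (or more directly to the free-group result recalled in the introduction), and the finite case is also classical. So I would simply cite these results and move on; no new argument is required.

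For the "only if" direction, I would argue contrapositively. Suppose $G$ is finitely generated and virtually free, but $G$ is neither finite nor free. Then $G$ is in particular infinite, finitely generated, virtually free, and not free, so Example \ref{example}(4) applies: by \cite[Theorem 4.6]{BK}, the profinite group $\mathcal{K}_G$ is not small, i.e., $\widehat{G}$ is "far from being projective" in the sense of Definition \ref{defkg}. Now I invoke Theorem \ref{main} directly: it says exactly that for such $G$, the theory $G$-$\mathrm{TCF}$ does not exist.

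Thus the entire content of the corollary is a bookkeeping combination of Theorem \ref{main} with the profinite computation in Example \ref{example}(4). There is no real obstacle, because the heavy lifting (the non-smallness of $\mathcal{K}_G$ for this class of groups, and the deduction of non-companionability from non-smallness) has already been carried out earlier in the paper and in \cite{BK}. The only thing one must be careful about is to check that "finite or free" really covers all the cases where Theorem \ref{main} is not available, i.e., that Example \ref{example}(4) genuinely applies to every finitely generated virtually free group which is neither finite nor free; this follows because any such group is automatically infinite.
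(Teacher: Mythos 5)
Your proposal is correct and follows exactly the paper's own proof: the "if" direction cites the known finite and free cases, and the "only if" direction combines \cite[Theorem 4.6]{BK} (as recorded in Example \ref{example}(4)) with Theorem \ref{main}. No differences worth noting.
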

\begin{proof}
Since it is well-known that if $G$ is finite or $G$ is free, then the theory $G$-TCF exists (see \cite{HK3}, \cite{acfa1}, and \cite{BHKK}), it is enough to show the left-to-right implication. Suppose that $G$ is an infinite, finitely generated virtually free group, which is not free. By \cite[Theorem 4.6]{BK}, the profinite group $\mathcal{K}_G$ is not small. By Theorem \ref{main}, the theory $G$-TCF does not exist.
\end{proof}
\begin{remark}\label{remque}
We discuss here some additional issues related to the results above.
\begin{enumerate}
\item The assumption on finite generation of $G$ was not used directly in the arguments above, however, to be able to use the crucial Theorem \ref{absgf}, we need to assume that $G$ is finitely generated. It is still possible that Theorem \ref{main} is true without the assumption that $G$ is finitely generated, since we do not have a counterexample for such a more general statement.

\item Hoffmann conjectured (\cite[Conjecture 5.2]{Hoff3}) that if a theory $T$ has a model companion and a group $G$ is finite, then the theory of $G$-actions on models of $T$ has a model companion as well. If we take for $T$ the theory of difference fields (which are $\Zz$-fields in our terminology) and for $G$ any finite non-trivial group, then the theory of $G$-actions on models of $T$ is the same as the theory of $(\Zz\times G)$-fields. By Corollary \ref{strneg}, the theory $(\Zz\times G)$-TCF does not exists, so we get a counterexample to Hoffmann's conjecture.

\item It was also conjectured (private communications) that if $G$ and $H$ are groups and the theories $G$-TCF and $H$-TCF exist, then the theory $(G\ast H)$-TCF exists as well. Using Corollary \ref{strneg}, we can see that this not the case, for example if one takes $G=H=C_2$.

\end{enumerate}

\end{remark}

\section{Nilpotent groups and summary}\label{sec4}
In this section, we give the full description of those finitely generated nilpotent groups $G$ for which the theory $G$-TCF exists and we also summarize what we know about the companionability of the theories of group actions on fields.

\subsection{Finitely generated nilpotent groups}\label{sec41}
We will use the fact that pronilpotent groups are fully described by their pro-$p$ Sylow subgroups (see Section \ref{sec2}). The crucial preparatory result is the following, which may be a folklore.
\begin{prop}\label{rk2prop}
Assume $N$ is an infinite finitely generated nilpotent group which is not cyclic. Then, there is a prime number $p$ such that $\widehat{N}_p$ is infinite and $\mathrm{rk}(\widehat{N}_p)\geqslant 2$.
\end{prop}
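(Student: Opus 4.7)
The plan is to reduce the proposition to a computation with the abelianization $N^{\mathrm{ab}}$. Writing $N^{\mathrm{ab}}\cong \Zz^r\oplus T$ with $T$ a finite abelian group, I would first record two standard facts about finitely generated pro-$p$ groups. Burnside's basis theorem, applied to the topologically finitely generated pro-$p$ group $\widehat{N}_p$, gives
$$\mathrm{rk}(\widehat{N}_p)\ =\ \dim_{\Ff_p}\bigl(N/[N,N]N^p\bigr)\ =\ \dim_{\Ff_p}\bigl(N^{\mathrm{ab}}/pN^{\mathrm{ab}}\bigr).$$
Secondly, the abelianization map induces a continuous surjection from $\widehat{N}_p$ onto the pro-$p$ completion of $N^{\mathrm{ab}}$, namely $\Zz_p^r\oplus T_p$, where $T_p$ is the $p$-primary part of $T$; in particular $\widehat{N}_p$ is infinite as soon as $r\geqslant 1$. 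So the task reduces to exhibiting a prime $p$ with $\dim_{\Ff_p}(N^{\mathrm{ab}}/pN^{\mathrm{ab}})\geqslant 2$, provided simultaneously $r\geqslant 1$.

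Next I would record two classical facts about a finitely generated nilpotent group $G$: (a) $G$ is finite if and only if $G^{\mathrm{ab}}$ is finite; (b) $G$ is cyclic if and only if $G^{\mathrm{ab}}$ is cyclic. Fact (a) follows from the surjections $G^{\mathrm{ab}}\otimes \gamma_i(G)/\gamma_{i+1}(G)\twoheadrightarrow \gamma_{i+1}(G)/\gamma_{i+2}(G)$ induced by the commutator map, which propagate finiteness down the lower central series. Fact (b) is proved by writing $G=\langle g\rangle\gamma_2(G)$: then elements of $\gamma_2(G)$ are central modulo $\gamma_3(G)$, whence $[G,G]\subseteq \gamma_3(G)$, i.e.\ $\gamma_2(G)=\gamma_3(G)$, and nilpotency collapses $\gamma_2(G)$ to $1$.

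With (a) and (b) at hand the conclusion is immediate. Infiniteness of $N$ combined with (a) forces $r\geqslant 1$. Non-cyclicity of $N$ combined with (b) rules out the case $r=1$ and $T=1$. Hence either $r\geqslant 2$, in which case \emph{any} prime $p$ satisfies $\dim_{\Ff_p}(N^{\mathrm{ab}}/pN^{\mathrm{ab}})\geqslant r\geqslant 2$, or $T\neq 1$, in which case any prime $p$ dividing $|T|$ satisfies $\dim_{\Ff_p}(N^{\mathrm{ab}}/pN^{\mathrm{ab}})\geqslant r+\dim_{\Ff_p}(T/pT)\geqslant 1+1=2$. The main, though mild, obstacle is fact (b): it is the one step that goes beyond pure abelianization bookkeeping, but it is a short and classical argument.
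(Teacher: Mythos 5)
Your proof is correct, and it takes a genuinely different route from the paper's. The paper argues by passing to quotients: supersolvability of $N$ yields an infinite virtually cyclic quotient $C$, the classification of such groups gives $C\cong G\rtimes\Zz$ with $G$ finite nilpotent or a surjection onto $\Zz\rtimes C_2$, and the desired infinite $p$-Sylow subgroup of rank at least $2$ is then exhibited concretely as $G_p\rtimes\Zz_p$ or $\Zz_2\rtimes C_2$, with a separate step writing $N\cong N_0\rtimes\Zz$ when $C=\Zz$. You instead work entirely with the abelianization. Your argument is sound: the identification of $\widehat{N}_p$ with the pro-$p$ completion of $N$ --- which you use implicitly both in the Burnside basis theorem step and in the surjection onto $\Zz_p^r\oplus T_p$, and which is the one point you should state explicitly --- holds because $\widehat{N}$ is pronilpotent and its $p$-Sylow subgroup is the inverse limit of the maximal finite $p$-quotients of $N$; the Frattini quotient computation then reduces everything to $\dim_{\Ff_p}\bigl(N^{\mathrm{ab}}/pN^{\mathrm{ab}}\bigr)\geqslant 2$, and your facts (a) and (b) (both classical, with correct sketches) convert ``infinite'' and ``not cyclic'' into the statement that $\Zz^r\oplus T$ has $r\geqslant 1$ and is not cyclic, after which the case split $r\geqslant 2$ versus $r=1,\ T\neq 1$ finishes the proof. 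Your approach is shorter and avoids the structure theory of virtually cyclic groups altogether, at the price of the (routine) pro-$p$ completion identification; the paper's quotient-chasing argument is more hands-on with semidirect products and matches the style of the computations done elsewhere in its Section 4.
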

\begin{proof}
 It is enough to find a quotient of $N$ for which the result holds. We will often use a fact saying that if a group $H$ acts on a finitely generated group $G$, then we have:
 $$\widehat{G} \rtimes \widehat{H}\cong \widehat{G\rtimes H}.$$
 Since $N$ is supersolvable, $N$ has an infinite virtually cyclic quotient $C$. By \cite[Lemma 11.4]{3m}, $C$ is of the form $G\rtimes \Zz$ where $G$ is finite and nilpotent or $C$ projects onto $\Zz\rtimes C_2$. Since we have:
 $$\left(\widehat{\Zz\rtimes C_2}\right)_2\cong \Zz_2\rtimes C_2,$$
 we can assume that $C=G\rtimes \Zz$. If $G$ is non-trivial, then we take a prime number $p$ dividing the order of $G$ and obtain:
 $$\left(\widehat{G\rtimes \Zz}\right)_p\cong G_p\rtimes \Zz_p.$$
The above pro-$p$ group is infinite and rank at least 2.

Therefore, we can assume that $C=\Zz$, so $N\cong N_0\rtimes \Zz$, where $N_0$ is a non-trivial finitely generated nilpotent group, since subgroups of finitely generated nilpotent groups are again finitely generated. In this case, we can proceed as above taking a prime $p$ which divides the order of a non-trivial finite quotient of $N_0$.
\end{proof}
We obtain the following.
\begin{theorem}\label{nilpfar}
 Assume $N$ is an infinite, finitely generated nilpotent group which is not cyclic. Then, the profinite group $\mathcal{K}_N$ is not small.
\end{theorem}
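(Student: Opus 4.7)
The plan is to reduce non-smallness of $\mathcal{K}_N$ to that of the pro-$p$ Frattini kernel $\mathcal{K}_{\widehat{N}_p}$ for the prime $p$ provided by Proposition \ref{rk2prop}, and then to settle the pro-$p$ case by a Sj\"{o}gren-style calculation in the spirit of Example \ref{example}(3). Fix $p$ so that $\widehat{N}_p$ is infinite with $\mathrm{rk}(\widehat{N}_p)\geq 2$. Since $N$ is nilpotent, $\widehat{N}$ is pronilpotent and splits as $\widehat{N}\cong \prod_{q} \widehat{N}_q$. I would first establish that the universal Frattini cover respects this product structure, i.e., $\widetilde{\widehat{N}} \cong \prod_q \widetilde{\widehat{N}_q}$, where each factor $\widetilde{\widehat{N}_q}$ is a free pro-$q$ group (its profinite and pro-$q$ universal Frattini covers coincide; see the next paragraph). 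Indeed, $\prod_q \widetilde{\widehat{N}_q}$ is projective because its $q$-Sylow is the free pro-$q$ group $\widetilde{\widehat{N}_q}$, and the induced continuous epimorphism onto $\widehat{N}$ is a Frattini cover because $\Phi(\prod_q \widetilde{\widehat{N}_q}) = \prod_q \Phi(\widetilde{\widehat{N}_q})$ (each factor being pro-$q$ for a distinct prime) and the kernel sits factor-by-factor inside each $\Phi(\widetilde{\widehat{N}_q})$. Uniqueness of the universal Frattini cover then gives $\mathcal{K}_N \cong \prod_q \mathcal{K}_{\widehat{N}_q}$, and in particular a continuous epimorphism $\mathcal{K}_N \twoheadrightarrow \mathcal{K}_{\widehat{N}_p}$.

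The next, and central, step is to show that $\mathcal{K}_{\widehat{N}_p}$ is not small. Since $\widehat{N}_p$ is pro-$p$ and a Frattini cover preserves the Frattini quotient, the projective profinite group $\widetilde{\widehat{N}_p}$ has $p$-elementary abelian Frattini quotient, which forces it to be pro-$p$; combined with projectivity, this identifies $\widetilde{\widehat{N}_p}$ with the free pro-$p$ group $\widehat{F}_d(p)$ on $d := \mathrm{rk}(\widehat{N}_p) \geq 2$ generators. So $\mathcal{K}_{\widehat{N}_p}$ is a closed normal subgroup of $\widehat{F}_d(p)$ of infinite index (because $\widehat{N}_p$ is infinite), and by Serre's theorem it is itself free pro-$p$. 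The crucial claim is that its topological rank is infinite. I would prove this by a Hochschild--Serre / Crowell-resolution argument modelled on Sj\"{o}gren's analysis of $\mathcal{K}_{\Zz\times \Zz}$: the continuous Pontryagin dual $\mathrm{Hom}_{\mathrm{cts}}(\mathcal{K}_{\widehat{N}_p}, \Ff_p)$ can be identified with an $\Ff_p[[\widehat{N}_p]]$-module of infinite $\Ff_p$-dimension, with the infinitude being forced by $\widehat{N}_p$ being an infinite group. Therefore $\mathcal{K}_{\widehat{N}_p}$ admits infinitely many open normal subgroups of index $p$ and is not small.

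Combining these steps, the continuous surjection $\mathcal{K}_N \twoheadrightarrow \mathcal{K}_{\widehat{N}_p}$ onto a non-small group, together with the fact that smallness is preserved under continuous quotients (already used in the proof of Lemma \ref{4}), forces $\mathcal{K}_N$ to fail to be small, as required.

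The main obstacle is the second step, verifying that $\mathcal{K}_{\widehat{N}_p}$ has infinite topological rank. The minimal number of \emph{normal} generators of $\mathcal{K}_{\widehat{N}_p}$ inside $\widehat{F}_d(p)$ equals $\dim H^2(\widehat{N}_p, \Ff_p)$, which is finite because $\widehat{N}_p$ is a finitely presented pro-$p$ group. The infinite topological rank therefore cannot be read off from a minimal presentation and emerges only after one accounts for the non-trivial continuous action of the infinite group $\widehat{N}_p$ on its relation module; pushing this calculation through in full generality, essentially a nilpotent-group generalisation of Sj\"{o}gren's argument for $\Zz\times\Zz$, is the technical heart of the proof.
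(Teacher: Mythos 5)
Your reduction to the $p$-Sylow subgroup and the identification $\widetilde{\widehat{N}_p}\cong\widehat{F}_d(p)$ via Tate's theorem match the paper's proof. The problem is the step you yourself flag as ``the technical heart'': it is not only left uncarried-out, but the sketch you give of it is wrong as stated. You claim that the infinite $\Ff_p$-dimension of $\mathrm{Hom}_{\mathrm{cts}}(\mathcal{K}_{\widehat{N}_p},\Ff_p)$ is ``forced by $\widehat{N}_p$ being an infinite group''. It is not: if $\widehat{N}_p$ were itself a free pro-$p$ group of rank $d\geq 2$ (an infinite group of rank $\geq 2$), the kernel would be trivial and the dual zero-dimensional. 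What is actually needed, and what your argument never supplies, is the non-triviality of $\mathcal{K}_{\widehat{N}_p}$ --- equivalently, that $\widehat{N}_p$ is not free pro-$p$. This is precisely where the nilpotency hypothesis must enter, and it never does in your proposal: since $N$ is nilpotent, $\widehat{N}_p$ satisfies a nilpotency identity, whereas $\widehat{F}_d(p)$ for $d\geq 2$ contains a free subgroup on two generators and hence is not nilpotent, so the cover map has non-trivial kernel. Without this observation your planned Hochschild--Serre/Crowell computation has nothing to get off the ground with (the relation module could a priori vanish), so the gap is a missing idea, not just a missing verification.

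Once non-triviality is in hand, the paper finishes without any cohomological computation: a closed normal subgroup of $\widehat{F}_d(p)$ ($d\geq 2$) that is non-trivial and of infinite index is isomorphic to $\widehat{F}_{\omega}(p)$ by \cite[Proposition 8.6.3]{progps}, hence not small. This single citation replaces the entire Sj\"{o}gren-style analysis of the relation module that you propose to generalize; if you do want to pursue the dualizing/Crowell route, you would in effect be reproving that proposition, and you would still need the non-freeness of $\widehat{N}_p$ (i.e.\ $H^2(\widehat{N}_p,\Ff_p)\neq 0$) as input.
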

\begin{proof}
By Proposition \ref{rk2prop}, there is a prime  number $p$ such that $\mathrm{rk}(\widehat{N}_p)\geqslant 2$. Since for any pronilpotent group $\mathcal{N}$, we have:
$$(\widetilde{\mathcal{N}})_p=\widetilde{(\mathcal{N}_p)},$$
it is enough to show that $\mathcal{K}_{\widehat{N}_p}$ is not small. However, since a pro-$p$ group is projective if and only if it is pro-$p$ free (this is a result of Tate, see \cite[Proposition 22.7.6]{FrJa}), we get that:
$$\widetilde{\widehat{N}}_p\cong \widehat{F}_{r}(p),$$
where $r=\mathrm{rk}(\widehat{N}_p)\geqslant 2$. Since $\widehat{N}_p$ is not only pronilpotent but also nilpotent, we get that the universal Frattini cover map
$$\widetilde{\widehat{N}}_p\ra \widehat{N}_p$$
is not an isomorphism ($\widehat{F}_{r}(p)$ is not nilpotent for $r\geqslant 2$, since it contains a free group on two generators as a subgroup, see \cite[Proposition 3.3.6]{progps}). Therefore, $\mathcal{K}_{\widehat{N}_p}$ is a closed, normal, non-trivial, and infinite index subgroup of $\widehat{F}_{r}(p)$. By \cite[Proposition 8.6.3]{progps}, we obtain:
$$\mathcal{K}_{\widehat{N}_p}\cong \widehat{F}_{\omega}(p),$$
so $\mathcal{K}_{\widehat{N}_p}$ is not small.
\end{proof}
\begin{cor}\label{nilpans}
Let $N$ be a finitely generated nilpotent group. Then, the theory $N$-$\mathrm{TCF}$ exists if and only if $N$ is finite or $N$ is cyclic.
\end{cor}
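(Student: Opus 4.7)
The plan is to split Corollary \ref{nilpans} into the two implications and reduce each to a result already available in the paper. For the ``if'' direction, note that the condition ``$N$ is finite or $N$ is cyclic'' covers two cases: finite nilpotent groups and the infinite cyclic group $\Zz$ (any infinite finitely generated cyclic group is $\Zz$). When $N$ is finite, the existence of $N$-TCF is the result of Hoffmann--Kowalski, and when $N\cong \Zz$, the theory $N$-TCF coincides with Chatzidakis--Hrushovski's theory ACFA. Both of these are exactly the references cited in the proof of Corollary \ref{strneg}, so this direction requires no new argument.

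For the ``only if'' direction, I would argue by contraposition. Suppose $N$ is a finitely generated nilpotent group that is neither finite nor cyclic. Then $N$ is infinite, finitely generated, nilpotent, and non-cyclic, which is exactly the hypothesis of Theorem \ref{nilpfar}. Applying that theorem, we conclude that the profinite group $\mathcal{K}_N$ is not small. Now Theorem \ref{main} applies (its only hypothesis is finite generation of $N$ together with non-smallness of $\mathcal{K}_N$) and yields that the theory $N$-TCF does not exist.

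Thus the proof is a direct assembly of Theorem \ref{nilpfar} and Theorem \ref{main}, together with the classical existence results for finite groups and for $\Zz$. There is no real obstacle: the only point worth checking is the parsing of the word ``cyclic'', namely that every finitely generated cyclic group is either finite or isomorphic to $\Zz$, so that the negation of ``finite or cyclic'' under the standing assumption of finite generation is precisely ``infinite and non-cyclic'', which is what Theorem \ref{nilpfar} requires.
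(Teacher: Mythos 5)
Your proposal is correct and follows essentially the same route as the paper: the paper's proof simply says to repeat the proof of Corollary \ref{strneg} with \cite[Theorem 4.6]{BK} replaced by Theorem \ref{nilpfar}, which unwinds to exactly your argument (finite and $\Zz$ cases for the ``if'' direction, and Theorem \ref{nilpfar} combined with Theorem \ref{main} for the ``only if'' direction). Your explicit remark that a finitely generated cyclic group is either finite or isomorphic to $\Zz$ is a correct and worthwhile detail that the paper leaves implicit.
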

\begin{proof}
The proof is the same as the proof of Corollary \ref{strneg}, where we replace \cite[Theorem 4.6]{BK} with Theorem \ref{nilpfar}
\end{proof}
Since the group $\Zz\times \Zz$ is infinite, nilpotent, and not cyclic, Corollary \ref{nilpans} includes Hrushovski's result about the non-companionability of the theory of fields with two commuting automorphisms.

\subsection{Free products}\label{secfree}
This part is inspired by a question of Alexander Ivanov regarding the existence of the theory $(\Zz^2\ast \Zz)$-TCF. The motivation for this question comes from the fact that the group $\Zz^2\ast \Zz$ is fully residually free and we know that for a free group $F$, the theory $F$-TCF exists.

To answer the question above, we need the following general result.
\begin{theorem}\label{thmfree}
For any groups $G,H$, we have a natural epimorphism:
$$\mathcal{K}_{G\ast H}\ra \mathcal{K}_{G}.$$
\end{theorem}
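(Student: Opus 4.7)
The plan is to construct the epimorphism $\mathcal{K}_{G\ast H}\to\mathcal{K}_G$ by exploiting the universal property of the free product together with projectivity of the universal Frattini covers, and then to verify surjectivity by exhibiting a one-sided inverse up to an automorphism.

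First I would use the canonical retraction $r\colon G\ast H\to G$ (the identity on $G$ and trivial on $H$) together with the inclusion $i\colon G\hookrightarrow G\ast H$; these satisfy $r\circ i=\mathrm{id}_G$. Passing to profinite completions yields continuous group homomorphisms $\hat r\colon\widehat{G\ast H}\to\widehat G$ and $\hat\imath\colon\widehat G\to\widehat{G\ast H}$ with $\hat r\circ\hat\imath=\mathrm{id}_{\widehat G}$. Projectivity of $\widetilde{\widehat{G\ast H}}$ applied to the continuous epimorphism $\widetilde{\widehat G}\to\widehat G$ together with the continuous map $\widetilde{\widehat{G\ast H}}\to\widehat{G\ast H}\xrightarrow{\hat r}\widehat G$ provides a continuous lift $\tilde r\colon\widetilde{\widehat{G\ast H}}\to\widetilde{\widehat G}$. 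In the same way, projectivity of $\widetilde{\widehat G}$ yields a continuous lift $\tilde\imath\colon\widetilde{\widehat G}\to\widetilde{\widehat{G\ast H}}$ covering $\hat\imath$. Restricting $\tilde r$ and $\tilde\imath$ to the kernels of the universal Frattini covers produces continuous group homomorphisms
$$\psi\colon\mathcal{K}_{G\ast H}\ra\mathcal{K}_G,\qquad\iota\colon\mathcal{K}_G\ra\mathcal{K}_{G\ast H},$$
and the map $\psi$ is the one I propose as the content of the theorem.

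The essential step is then to verify that $\psi$ is surjective. The composition $\tilde r\circ\tilde\imath\colon\widetilde{\widehat G}\to\widetilde{\widehat G}$ is a continuous endomorphism that reduces to $\hat r\circ\hat\imath=\mathrm{id}_{\widehat G}$ modulo $\mathcal{K}_G$. A standard property of universal Frattini covers (essentially \cite[Proposition 22.6.1]{FrJa}) says that any such self-lift of the identity is automatically an automorphism: the image of the endomorphism maps onto $\widehat G$, so by the Frattini property it must be all of $\widetilde{\widehat G}$, and uniqueness of the universal Frattini cover over $\widehat G$ forces the endomorphism to be injective as well. Restricting to the kernels, $\psi\circ\iota$ is an automorphism of $\mathcal{K}_G$, whence $\psi$ is surjective.

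The main obstacle I anticipate is precisely this self-lift argument, and in particular the fact that although the lifts $\tilde\imath$ and $\tilde r$ are not canonical, their composition always lifts $\mathrm{id}_{\widehat G}$ and therefore falls under the scope of Frattini uniqueness. A secondary bookkeeping issue is that the word \emph{natural} in the statement must be read in a relaxed sense: different choices of $\tilde r$ produce different $\psi$, but any of them is an epimorphism by the above. Modulo these points, the proof is a purely formal diagram chase once projectivity of the universal Frattini covers is brought to bear.
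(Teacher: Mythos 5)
Your proof is correct, but it takes a genuinely different route from the paper's. The paper's proof is a two-line citation: it invokes Ribes--Zalesskii to realize $\widehat{G}$ as a closed subgroup of $\widehat{G\ast H}$, and then applies \cite[Lemma 4.3]{BK}, a general lemma producing a continuous epimorphism $\mathcal{K}_{\mathcal{G}}\to\mathcal{K}_{\mathcal{H}}$ whenever $\mathcal{H}$ is a closed subgroup of $\mathcal{G}$ (the proof of that lemma restricts the universal Frattini cover to the preimage of $\mathcal{H}$, which is projective and therefore surjects onto $\widetilde{\mathcal{H}}$ over $\mathcal{H}$). You instead exploit the fact that $G$ is a \emph{retract} of $G\ast H$: the splitting $\hat r\circ\hat\imath=\mathrm{id}_{\widehat G}$ lets you lift both maps through the universal Frattini covers by projectivity, and the rigidity of universal Frattini covers (a surjective self-lift of the identity over $\widehat G$ is a Frattini cover with projective source, hence an isomorphism) shows $\psi\circ\iota$ is an automorphism of $\mathcal{K}_G$, so $\psi$ is onto. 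Your argument is self-contained modulo standard facts about projective profinite groups, avoids the Ribes--Zalesskii input entirely (indeed the retraction already shows $\hat\imath$ is a closed embedding), and yields the slightly stronger conclusion that $\mathcal{K}_G$ is a continuous retract of $\mathcal{K}_{G\ast H}$, since $\iota\circ(\psi\circ\iota)^{-1}$ is a genuine section of $\psi$. What it does not give is the paper's more widely applicable closed-subgroup lemma, which works for subgroups that are not retracts; and your observation that ``natural'' must be read loosely applies equally to the paper's construction, whose lift is likewise non-canonical.
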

\begin{proof}
By \cite[Exercise 9.1.1(a) and Corollary 9.1.4(a)]{progps}, we get that $\widehat{G}$ is topologically isomorphic with a closed subgroup of $\widehat{G\ast H}$. By \cite[Lemma 4.3]{BK}, there is a continuous epimorphism:
$$\mathcal{K}_{\widehat{G\ast H}}\ra \mathcal{K}_{\widehat{G}},$$
which gives the result.
 \end{proof}
\begin{cor}
For any finitely generated group $H$, the theory $(\Zz^2\ast H)$-$\mathrm{TCF}$ does not exist.
\end{cor}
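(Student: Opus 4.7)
The proof is a direct synthesis of the machinery already developed in the paper. First, I would note that $\Zz^2 \ast H$ is finitely generated, since $\Zz^2$ is (it is 2-generated) and $H$ is so by hypothesis, and a free product of finitely generated groups is finitely generated. Hence Theorem \ref{main} is applicable to $\Zz^2 \ast H$, and it suffices to verify that $\mathcal{K}_{\Zz^2 \ast H}$ is not small.

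Next, I would invoke Theorem \ref{thmfree} with $G = \Zz^2$ to obtain a continuous epimorphism
$$\mathcal{K}_{\Zz^2 \ast H} \twoheadrightarrow \mathcal{K}_{\Zz^2}.$$
By Example \ref{example}(3), we have
$$\mathcal{K}_{\Zz \times \Zz} \cong \prod_{p\text{:prime}} \widehat{F}_{\omega}(p),$$
which is manifestly not small (already for a single prime $p$, the free pro-$p$ group $\widehat{F}_\omega(p)$ has infinitely many open subgroups of index $p$). As observed in the proof of Lemma \ref{4}, any continuous quotient of a small profinite group is again small; contrapositively, since $\mathcal{K}_{\Zz^2}$ is not small and is a continuous quotient of $\mathcal{K}_{\Zz^2 \ast H}$, the latter cannot be small either.

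Finally, applying Theorem \ref{main} to $G = \Zz^2 \ast H$ yields that the theory $(\Zz^2 \ast H)$-$\mathrm{TCF}$ does not exist, completing the argument. There is no real obstacle here: the entire content of the corollary lies in having assembled Theorem \ref{thmfree}, Example \ref{example}(3), and Theorem \ref{main} beforehand, and the proof is essentially a two-line deduction from these.
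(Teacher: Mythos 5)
Your proposal is correct and follows exactly the paper's own route: Example \ref{example}(3) gives that $\mathcal{K}_{\Zz^2}$ is not small, Theorem \ref{thmfree} transfers this to $\mathcal{K}_{\Zz^2\ast H}$ via the contrapositive of ``quotients of small groups are small,'' and Theorem \ref{main} concludes. The explicit check that $\Zz^2\ast H$ is finitely generated is a small, welcome addition that the paper leaves implicit.
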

\begin{proof}
By Example \ref{example}(3), we have that the profinite group $\mathcal{K}_{\Zz^2}$ is not small. By Theorem \ref{thmfree}, we obtain that the profinite group $\mathcal{K}_{\Zz^2\ast H}$ is not small either. By Theorem \ref{main}, the theory $(\Zz^2\ast H)$-TCF does not exists.
\end{proof}

\subsection{Summary}\label{secsum}
In this final section, we give a summary of what we know regarding existence of the theories $G$-TCF for different types of groups $G$.

If the group $G$ is finitely generated, then we do not know any ``new'' (that is: infinite and not free) types of groups $G$ such that the theory $G$-TCF exists. Therefore, it is reasonable to ask the following.
\begin{question}\label{q1}
Suppose that $G$ is an infinite and finitely generated group. Does the theory $G$-TCF exists if and only if $G$ is free?
\end{question}
Corollary \ref{nilpans} answers Question \ref{q1} positively in the case of a nilpotent group $G$. Possibly, the methods of Section \ref{sec41} could be extended to supersolvable or even solvable groups. On a rather ``orthogonal'' side of the spectrum of finitely generated groups, Question \ref{q1} has an affirmative answer for virtually free groups by Corollary \ref{strneg}.
\begin{remark}
It is natural to start with checking for which finitely generated groups $G$, the profinite group $\mathcal{K}_G$ is not small. We know that:
\begin{enumerate}
  \item if $G$ is finite or free, then $\mathcal{K}_G$ is small;
  \item excluding Item (1) above, $\mathcal{K}_G$ is not small in the case when $G$ is virtually free \cite[Theorem 4.6.]{BK};
  \item $\mathcal{K}_G$ is small (even trivial, since $\widehat{G}$ is trivial!) for very complicated groups like \emph{Tarski monster groups} (see e.g. \cite[Section 1]{sapir}) or the \emph{Higman group} (see \cite{high}).
\end{enumerate}
\end{remark}
We do not have any counterexample for the following.
\begin{question}\label{sbgp}
Suppose that $H<G$ and the theory $G$-TCF exists. Does the theory $H$-TCF exist as well?
\end{question}
We conjectured \cite[Conjecture 6.6]{BK2} that Question \ref{sbgp} has the affirmative answer for $H=\Zz\times \Zz$, but we were able to show only a slightly weaker (and a bit surprising) result, which is \cite[Corollary 6.9]{BK2}.
\begin{remark}\label{pecvsgc}
We would like to point out here an important difference between Hrushovski's proof of non-existence of the theory $(\Zz\times \Zz)$-TCF and our proof of a more general result (Corollary \ref{nilpans}). Hrushovski focused on \emph{p.e.c.} (\emph{pseudo e.c.}) $G$-fields, that is $G$-fields which are existentially closed in those $G$-field extensions, which are regular extensions of pure fields. It is rather clear that a $G$-field is e.c. if and only if it is p.e.c. and $G$-closed (see \cite[Remark 2.3(1)]{BK2}). Hrushovski's proof gives that actually there are no saturated p.e.c. $(\Zz\times \Zz)$-fields (see \cite[Theorem 6.7]{BK2}), which is a stronger statement comparing to our result (in the case of $G=\Zz\times \Zz$).
\end{remark}
The following question is related to Remark \ref{pecvsgc} above.
\begin{question}\label{q2}
Suppose that $G$ is a finitely generated and virtually free group. Is the class of p.e.c. $G$-fields elementary?
\end{question}
The methods from \cite{BK} could be used to attack Question \ref{q2}, but we do not know how to do it exactly.

A positive answer to Question \ref{q1} would be quite negative for the whole theory, since there will be no ``new'' theories in the case of finitely generated groups. Therefore, one can turn the attention to the arbitrary groups. The first non-free infinite case was considered by Medvedev \cite{med1} who showed that (in our terminology) the theory $\Qq$-TCF exists. In \cite{BK2}, we obtained the full classification of torsion Abelian groups $A$ such that the theory $A$-TCF exists. The next step could be to extend this classification to arbitrary Abelian groups. Having in mind the results of this paper, the following may be reasonable.
\begin{question}\label{q3}
Suppose that $A$ is an Abelian group. Is it true that the theory $A$-TCF exists if and only if none of the following groups embed in $G$ ($p$ is a prime number below)?
\begin{itemize}
  \item $\Zz\times \Zz$.
  \item $\Zz\times C_p$.
  \item $C_p^{(\omega)}$, which is the infinite countable direct sum of $C_p$'s.
  \item $C_p\times C_{p^{\infty}}$, where $C_{p^{\infty}}$ is the Pr\"{u}fer $p$-group.
\end{itemize}
\end{question}
The last two types of groups are the ``forbidden groups'' from \cite[Remark 1.2(1)]{BK2}. The results from \cite{BK2} can be possibly extended to locally finite nilpotent groups and then one could appropriately generalize Question \ref{q3} above.
\bibliographystyle{plain}
\bibliography{harvard}

\begin{thebibliography}{10}

\bibitem{BHKK}
\"{O}zlem Beyarslan, Daniel~Max Hoffmann, Moshe Kamensky, and Piotr Kowalski.
\newblock Model theory of fields with free operators in positive
  characteristic.
\newblock {\em Transactions AMS}, 372(8):5991--6016, 2019.

\bibitem{BK2}
\"{O}zlem Beyarslan and Piotr Kowalski.
\newblock Model theory of {G}alois actions of torsion {A}belian groups.
\newblock To appear in \emph{Journal of the Institute of Mathematics of
  Jussieu}, DOI: 10.1017/S1474748022000305, available on
  \url{https://arxiv.org/abs/2003.02329}.

\bibitem{BK}
\"{O}zlem Beyarslan and Piotr Kowalski.
\newblock Model theory of fields with virtually free group actions.
\newblock {\em Proc. London Math. Soc.}, 118(2):221--256, 2019.

\bibitem{ChaHel}
Zoe Chatzidakis.
\newblock Notes on the model theory of finite and pseudo-finite fields.
\newblock Helsinki, Lecture notes, available on
  \url{https://www.math.ens.psl.eu/~zchatzid/papiers/Helsinki.pdf}, 2009.

\bibitem{acfa1}
Zo{\'e} Chatzidakis and Ehud Hrushovski.
\newblock Model theory of difference fields.
\newblock {\em Trans. AMS}, 351(8):2997--3071, 2000.

\bibitem{FrJa}
M.D. Fried and M.~Jarden.
\newblock {\em Field Arithmetic}.
\newblock Ergebnisse der Mathematik und ihrer Grenzgebiete. 3. Folge / A Series
  of Modern Surveys in Mathematics. 3rd Edition, Springer, 2008.

\bibitem{essgp}
Mikhail Gromov.
\newblock Hyperbolic groups.
\newblock In S.~M. Gersten, editor, {\em Essays in Group Theory}, volume~8 of
  {\em Mathematical Sciences Research Institute Publications}, pages 75--263.
  Springer, 1987.

\bibitem{3m}
John Hempel.
\newblock {\em 3-manifolds}.
\newblock Princeton University Press, 1976.

\bibitem{high}
Graham Higman.
\newblock A finitely generated infinite simple group.
\newblock {\em Journal of the London Mathematical Society, Second Series},
  26:61--64, 1951.

\bibitem{Hoff3}
Daniel~Max Hoffmann.
\newblock Model theoretic dynamics in {G}alois fashion.
\newblock {\em Annals of Pure and Applied Logic}, 170(7):755--804, 2019.

\bibitem{HK3}
Daniel~Max Hoffmann and Piotr Kowalski.
\newblock Existentially closed fields with finite group actions.
\newblock {\em Journal of Mathematical Logic}, 18(1):1850003, 2018.

\bibitem{khmy}
O.~Kharlampovich and A.~Myasnikov.
\newblock Hyperbolic groups and free constructions.
\newblock {\em Transactions of the American Mathematical Society},
  350(2):571--613, 1998.

\bibitem{med1}
Alice Medvedev.
\newblock $\mathbb{Q}${ACFA}.
\newblock Preprint, available on \url{http://arxiv.org/abs/1508.06007}.

\bibitem{ols}
A.~Yu. Ol'shanskii.
\newblock On the {B}ass-{L}ubotzky {Q}uestion about {Q}uotients of {H}yperbolic
  {G}roups.
\newblock {\em Journal of Algebra}, 226:807--817, 2000.

\bibitem{progps}
Luis Ribes and Pavel Zalesskii.
\newblock {\em Profinite groups}.
\newblock Springer New York, 2000.

\bibitem{sapir}
Mark~V. Sapir.
\newblock Some group theory problems.
\newblock {\em International Journal of Algebra and Computation},
  17:1189--1214, 2007.
\newblock Special Issue for Papers from the Conference in Honor of the 80th
  Birthday of Professor Boris I. Plotkin.

\bibitem{Sjo}
Nils Sj\"{o}gren.
\newblock The {M}odel {T}heory of {F}ields with a {G}roup {A}ction.
\newblock {\em Research Reports in Mathematics, Department of Mathematics
  Stockholm University}, 2005.
\newblock Available on \url{http://www2.math.su.se/reports/2005/7/}.

\end{thebibliography}

\end{document}